\documentclass[12pt]{amsart}

\usepackage{amssymb,amsthm}

\newtheorem{theorem}{Theorem}[section]
\newtheorem{corollary}[theorem]{Corollary}

\newtheorem{lemma}[theorem]{Lemma}


\raggedbottom



\begin{document}

\title[Maximal abelian subgroups]{A lower bound on the size of maximal abelian subgroups}

\author[M. L. Lewis]{Mark L.\ Lewis}
\address{Department of Mathematical Sciences, Kent State University, Kent, OH 44242}
\email{lewis@math.kent.edu}

\keywords{Abelian subgroups, conjugacy class sizes, group center}
\subjclass[2010]{Primary: 20D25}

\begin{abstract}
Let $G$ be a $p$-group for some prime $p$.  Let $n$ be the positive integer so that $|G:Z(G)| = p^n$.  Suppose $A$ is a maximal abelian subgroup of $G$.  Let $$p^l = {\rm max} \{|Z(C_G (g)):Z(G)| : g \in G \setminus Z(G)\},$$ $$p^b = {\rm max} \{|cl(g)| : g \in G \setminus Z(G) \},$$ and $p^a = |A:Z(G)|$.  Then we show that $a \ge n/(b+l)$. 
\end{abstract}

\maketitle

\section{Introduction}

Throughout this paper, all groups are finite.  Suppose $G$ is a group and $A$ is a maximal abelian subgroup of a $p$-group $G$ for some prime $p$.  In this paper, we want to obtain a lower bound on the size of $|A|$.  We will see that it is not difficult to show that $Z(G) \le A$, and so, in fact, we will find a lower bound on $|A:Z(G)|$.   

We are going to bound $|A: Z(G)|$ in terms of two quantities.  The first quantity is going to be maximum of $\left\{ |G:C_G(x)| : x \in G \setminus Z (G) \right\}$.  This, of course, is the maximum size of a conjugacy class of $G$.  The second quantity is related.  Given an element $a \in G \setminus Z(G)$, we define the {\it center} of the element $a$ to be $Z(a) = Z (C_G (a))$.  We define ${\mathcal Z} (G) = \left\{ Z (a) : a \in G \setminus Z (G) \right\}$.  The second quantity we use is the maximum of $\left\{ |Z:Z(G)|: Z \in {\mathcal Z} (G) \right\}$.   With these two quantities in hand, we will obtain the following lower bound on the index of $|A:Z(G)|$ in terms of these quantities.

\begin{theorem} \label{main}
Let $G$ be a $p$-group with $|G:Z(G)| = p^n$.  Suppose $A$ to be a maximal abelian subgroup of $G$.  Set 
$$p^l = {\rm max} \{|Z:Z(G)| : Z \in {\mathcal Z} (G)\}$$ and 
$$p^b = {\rm max} \{|G:C_G(x)| : x \in G \setminus Z(G)\}.$$ 

If $p^a = |A:Z(G)|$, then $a \ge n/(b+l)$.
\end{theorem}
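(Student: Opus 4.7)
The plan is to exhibit at most $a$ elements of $A$ whose common centralizer is exactly $A$, and then use the conjugacy-class bound $|G : C_G(x)| \le p^b$ on each to control $|G : A|$. The hypothesis on $l$ will enter only at the very end, through the trivial observation that $l \ge 1$.

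First I would record two consequences of the maximality of $A$. If $z \in Z(G)$, then $\langle A, z\rangle$ is abelian, so $z \in A$ by maximality; hence $Z(G) \le A$ and $|G : A| = p^{n-a}$. Similarly, if $g \in C_G(A)$, then $\langle A, g\rangle$ is abelian, so $g \in A$; hence $C_G(A) = A$.

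The main step uses the structure of abelian $p$-groups: since $A/Z(G)$ has order $p^a$, it can be generated by some $r \le a$ elements. Choose $x_1, \dots, x_r \in A$ whose images form a minimal generating set for $A/Z(G)$; then each $x_i \in A \setminus Z(G)$ and $\langle x_1, \dots, x_r, Z(G)\rangle = A$. Consequently,
\[
\bigcap_{i=1}^{r} C_G(x_i) \;=\; C_G\bigl(\langle x_1, \dots, x_r, Z(G)\rangle\bigr) \;=\; C_G(A) \;=\; A.
\]
Combining the elementary bound $|G : H \cap K| \le |G : H| \cdot |G : K|$ (iterated) with $|G : C_G(x_i)| \le p^b$ yields $|G : A| \le p^{rb} \le p^{ab}$. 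Since $|G : A| = p^{n-a}$, this rearranges to $n \le a(b+1)$. To conclude, note that $l \ge 1$, because any $x \in A \setminus Z(G)$ lies in $Z(C_G(x)) \setminus Z(G)$, so $|Z(x):Z(G)| \ge p$. Therefore $n \le a(b+1) \le a(b+l)$, as required.

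I do not anticipate a serious obstacle; the only slightly nontrivial input is the standard bound $r \le a$ on the rank of the abelian $p$-group $A/Z(G)$, which is immediate from its decomposition into cyclic factors. The argument in fact gives the stronger inequality $a \ge n/(b+1)$, so the hypothesis on $l$ is used only weakly via $l \ge 1$. If the author's proof takes a different route, I would expect an inductive argument along a descending chain $G \supseteq C_G(x_1) \supseteq C_G(x_1) \cap C_G(x_2) \supseteq \cdots$ in which the definition of $l$ is used at each step to control how the centers of the successive centralizers grow; but for the stated inequality the direct generating-set argument above suffices.
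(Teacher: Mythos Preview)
Your proof is correct, and it is genuinely different from the paper's. The paper builds $A$ as a product $\prod_{i=1}^t Z(g_i)=\bigcap_{i=1}^t C_G(g_i)$ via its structural Theorem \ref{threeg5}, then uses both defining quantities substantively: the bound $|G:A|\le p^{bt}$ comes from the intersection description, while $|A:Z(G)|\le p^{lt}$ comes from the product description, giving $t\ge n/(b+l)$; a separate argument shows $a\ge t$. You instead bypass the $Z(a)$ machinery entirely by observing that $A/Z(G)$, being an abelian group of order $p^a$, is generated by at most $a$ elements, so $A=\bigcap_{i=1}^r C_G(x_i)$ with $r\le a$; this yields directly $n-a\le ab$, i.e.\ the sharper inequality $a\ge n/(b+1)$, and you recover the stated bound from the trivial fact $l\ge 1$. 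Thus your argument is shorter, uses $l$ only vestigially, and actually proves a stronger conclusion (and the same reasoning establishes the localized Theorem \ref{threeia} as well). What the paper's route buys is that its chain of subgroups $\prod_{j\le i}Z(g_j)$ and the characterization in Theorem \ref{threeg5} are of independent interest and set up the later applications to semi-extraspecial groups; but for the bare inequality in Theorem \ref{main}, your generating-set argument is the more economical one.
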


Theorem \ref{main} gives a global lower bound for all of the maximal abelian subgroups of $G$.  However, if we specialize our quantities to a specific maximal abelian subgroup $A$ of $G$, we can obtain a more precise lower bound for $|A:Z (G)|$.  We replace the first quantity by the maximum of $\left\{ |G:C_G(x)| : x \in A \setminus Z (G) \right\}$.  This, of course, is the maximum size of a conjugacy class that intersects $A$.  For the second quantity we set ${\mathcal Z} (G \mid A) = \{ Z \in {\mathcal Z} (G) : Z \le A \}$, and observe that ${\mathcal Z} (G \mid A) = \{ Z(a) \mid a \in A \setminus Z(G)\}$.  We then replace the second quantity by the maximum of $\left\{ |Z:Z(G)|: Z \in {\mathcal Z} (G \mid A) \right\}$.   With these two quantities in hand, we will obtain the following lower bound on the index of $|A:Z(G)|$ in terms of these quantities.

\begin{theorem} \label{threeia}
Let $G$ be a $p$-group with $|G:Z(G)| = p^n$.  Suppose $A$ to be a maximal abelian subgroup of $G$.  Set 
$$p^l = {\rm max} \{|Z:Z(G)| : Z \in {\mathcal Z} (G \mid A)\}$$ and 
$$p^b = {\rm max} \{|G:C_G(x)| : x \in A \setminus Z(G)\}.$$ 

If $p^a = |A:Z(G)|$, then $a \ge n/(b+l)$.
\end{theorem}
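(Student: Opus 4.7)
The plan is to run a simultaneous two-sided iteration. Starting from the pair $(Z(G), G)$, at each step I enlarge an abelian subgroup of $A$ containing $Z(G)$ by adjoining one element of $A$ and, in parallel, shrink the corresponding centralizer in $G$. The per-step indices are controlled: the abelian side grows by at most $p^l$ and the centralizer side shrinks by at most $p^b$, while the number of steps is bounded by $a = \log_p |A:Z(G)|$. Multiplying these estimates will yield $n \le a(b+l)$.

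I would first establish two structural facts. Since $A$ is maximal abelian in the $p$-group $G$, $C_G(A) = A$: otherwise any $g \in C_G(A) \setminus A$ would make $\langle A, g\rangle$ a strictly larger abelian subgroup. In particular, $Z(G) \le A$. Similarly, for $x \in A \setminus Z(G)$, the subgroup $A$ is maximal abelian in $C_G(x)$, and since $Z(C_G(x))$ centralizes $A$, the product $Z(C_G(x)) \cdot A$ is abelian and must coincide with $A$, giving $Z(x) \le A$. Thus $Z(x) \in \mathcal{Z}(G \mid A)$ and $|Z(x):Z(G)| \le p^l$.

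Next, set $H_0 = Z(G)$ and $D_0 = G$, and iterate: while $H_i < A$, pick $x_{i+1} \in A \setminus H_i$ and set $H_{i+1} = \langle H_i, x_{i+1}\rangle$ and $D_{i+1} = D_i \cap C_G(x_{i+1})$. The chain $H_0 < H_1 < \cdots$ strictly ascends in $A$ and so terminates at some step $k$ with $H_k = A$; at that point $D_k = C_G(\langle x_1, \ldots, x_k\rangle) = C_G(A) = A$ by self-centralization. At each step, $x_{i+1} \notin H_i \supseteq Z(G)$, so $x_{i+1} \in A \setminus Z(G)$. The index $|H_{i+1}:H_i|$, which equals the order of the image of $x_{i+1}$ in $A/H_i$, divides $|\langle x_{i+1}, Z(G)\rangle:Z(G)|$ and so is at most $|Z(x_{i+1}):Z(G)| \le p^l$ (using $x_{i+1} \in Z(x_{i+1})$); meanwhile $|D_i:D_{i+1}| \le |G:C_G(x_{i+1})| \le p^b$, and $|H_{i+1}:H_i| \ge p$ since $x_{i+1} \notin H_i$.

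Telescoping gives $|A:Z(G)| \le p^{lk}$, $|G:A| = |G:D_k| \le p^{bk}$, and $p^k \le |A:Z(G)| = p^a$, the last yielding $k \le a$. Multiplying the first two estimates,
\[
p^n = |G:A|\cdot|A:Z(G)| \le p^{(b+l)k} \le p^{(b+l)a},
\]
and therefore $a \ge n/(b+l)$. The main point requiring care is the per-step bound $|H_{i+1}:H_i| \le p^l$, which rests on $Z(x_{i+1}) \le A$ so that the relevant maximum is taken over $\mathcal{Z}(G \mid A)$ rather than all of $\mathcal{Z}(G)$; the rest is bookkeeping.
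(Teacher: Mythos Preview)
Your proof is correct and follows essentially the same route as the paper's: build a chain from $Z(G)$ up to $A$ while simultaneously intersecting centralizers from $G$ down to $A$, bound each step by $p^l$ on the abelian side and $p^b$ on the centralizer side, and telescope to get $n \le (b+l)k \le (b+l)a$. The only minor difference is that the paper adjoins the full subgroup $Z(g_i)$ at each step (invoking its structural Theorem~\ref{threeg5} to write $A = \prod_i Z(g_i) = \cap_i C_G(g_i)$), whereas you adjoin single elements $x_{i+1}$ and appeal directly to $C_G(A)=A$; this makes your argument slightly more self-contained, but the two are otherwise identical.
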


Notice that Theorem \ref{main} is an immediate consequence of Theorem \ref{threeia}, and hence, we will prove Theorem \ref{threeia}.

We believe that $Z(g)$ should be studied in more depth and we will prove several results regarding $Z(g)$ in more general contexts than $p$-groups.  In fact, most of our preliminary results are proved for all groups.  Only when we go to obtain our bound do we restrict ourselves to $p$-groups.  We initially proved this result in the setting of semi-extraspecial $p$-groups, and in Section \ref{ses}, we will specialize our results in that case.  We realized that our techniques worked in general groups and that our result could be proved for $p$-groups, and so, we have decided to present our work in the general setting and result in the setting of $p$-groups.   The initial question that motivated this work was asked by Thomas Keller and we would like to thank Thomas for his helpful conversations that led to this paper.  We would also like to thank Rachel Camina and Sophia Brenner for a careful reading of this paper and helpful comments and Rolf Brandl for several useful comments.  Finally, we would like to thank Josh Maglione for the computations that inform the examples in Section \ref{ses}.

\section{The Centers of Centralizers}

In this section, we prove some results regarding $C_G (a)$ and $Z(a)$.  This first result is well known.

\begin{lemma} \label{well}
Let $G$ be a group.  Then $Z(G) = \cap_{x \in G} C_G (x)$.
\end{lemma}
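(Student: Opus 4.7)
The plan is to verify the claimed equality by unwinding the definitions of the center and of centralizers, showing set containment in both directions through a single equivalence chain.

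First I would recall that, by definition, $Z(G) = \{z \in G : zx = xz \text{ for all } x \in G\}$, and that $C_G(x) = \{y \in G : yx = xy\}$ for each $x \in G$. The key observation is then simply that the condition ``$z$ commutes with every element of $G$'' is literally the conjunction over $x \in G$ of the conditions ``$z$ commutes with $x$,'' i.e.\ of the conditions $z \in C_G(x)$.

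Concretely, I would argue: $z \in Z(G)$ iff $zx = xz$ for every $x \in G$, iff $z \in C_G(x)$ for every $x \in G$, iff $z \in \bigcap_{x \in G} C_G(x)$. This gives the two containments at once, so $Z(G) = \bigcap_{x \in G} C_G(x)$.

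There is no real obstacle here; the statement is essentially a translation between two equivalent notations for the same set, and the paper flags it as well known. The only thing worth being careful about is to make the quantifier bookkeeping explicit, so that it is visible that the chain of ``iff''s is valid at each step.
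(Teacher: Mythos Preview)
Your argument is correct: it is the standard verification that the defining condition for $Z(G)$ is exactly the conjunction of the membership conditions for the $C_G(x)$. The paper itself offers no proof of this lemma at all, labeling it as well known and moving on, so there is nothing to compare against; your write-up simply fills in the omitted routine check.
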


Given $g \in G \setminus Z(G)$, we write $Z_G (g) = Z (C_G (g))$.  If it is clear what $G$ is, then we will suppress the $G$ and write $Z(g) = Z_G (g)$.

\begin{lemma} \label{threee}
Let $G$ be a group.  If $a \in G \setminus Z(G)$, then 
$$Z (a) = \cap_{b \in C_G (a)} C_G (b).$$
\end{lemma}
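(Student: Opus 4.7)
The plan is to deduce this directly from Lemma \ref{well} by applying it to the subgroup $H = C_G(a)$ rather than to $G$ itself. Lemma \ref{well} gives
$$Z(C_G(a)) = \bigcap_{b \in C_G(a)} C_{C_G(a)}(b).$$
Since $Z(a) = Z(C_G(a))$ by definition, the left-hand side is already what we want. The task therefore reduces to showing that the intersection on the right is equal to $\bigcap_{b \in C_G(a)} C_G(b)$, i.e.\ that one may replace the centralizers taken inside $C_G(a)$ by centralizers taken inside the full group $G$.

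For each $b \in C_G(a)$ one has the elementary identity $C_{C_G(a)}(b) = C_G(b) \cap C_G(a)$. Plugging this into the intersection gives
$$\bigcap_{b \in C_G(a)} C_{C_G(a)}(b) = C_G(a) \cap \bigcap_{b \in C_G(a)} C_G(b).$$
The key observation is that $a$ itself lies in $C_G(a)$, so the indexing set contains $b = a$, and the corresponding term in $\bigcap_{b \in C_G(a)} C_G(b)$ is $C_G(a)$. Hence the whole intersection is already contained in $C_G(a)$, so intersecting again with $C_G(a)$ is redundant and the identity
$$Z(a) = \bigcap_{b \in C_G(a)} C_G(b)$$
follows.

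There is no real obstacle here; the only thing to be slightly careful about is the distinction between centralizers in $H = C_G(a)$ and centralizers in $G$, and noticing that the ``redundancy'' argument via $b=a$ lets us drop the extra $C_G(a)$ factor. The hypothesis $a \in G \setminus Z(G)$ is not used in this argument — it only guarantees that $Z(a)$ is a meaningful object in the sense defined in the paragraph preceding the lemma.
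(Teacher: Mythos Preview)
Your proof is correct and follows essentially the same approach as the paper: apply Lemma~\ref{well} to $C_G(a)$, rewrite $C_{C_G(a)}(b)$ as $C_G(a)\cap C_G(b)$, and then use $a\in C_G(a)$ to drop the redundant $C_G(a)$ factor. Your remark that the hypothesis $a\notin Z(G)$ is not actually used in the argument is accurate; it is only there because $Z(a)$ is defined for such $a$.
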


\begin{proof}
By definition, we have $Z (a) = Z (C_G (a))$.  Applying Lemma \ref{well}, we have
$$Z (C_G (a)) = \cap_{b \in C_G (a)} C_{C_G (a)} (b) = \cap_{b \in C_G (a)} (C_G (a) \cap C_G(b)).$$
Since $a \in C_G (a)$, we see that 
$$\cap_{b \in C_G (a)} (C_G (a) \cap C_G(b)) = C_G (a) \cap (\cap_{b \in C_G (a)} C_G (b)) = \cap_{b \in C_G (a)} C_G (b),$$ and this proves the lemma.
\end{proof}

If $G$ is any group, we can set ${\mathcal C} (G) = \{ C_G (a) \mid a \in G \setminus Z(G) \}$ and ${\mathcal Z} (G) =\{ Z(a) \mid a \in G \setminus Z(G) \}$.  





We next show that if $a,b$ are not central elements of $G$ then $a \in C_G (b)$ if and only if $Z(a) \le C_G (b)$ and this occurs if and only if $Z(b) \le C_G (a)$.

\begin{lemma} \label{three}
Let $G$ be a group and suppose $a,b \in G \setminus Z(G)$.
\begin{enumerate}
\item $a \in C_G(b)$ if and only if $Z(a) \le C_G (b)$.
\item $Z(a) \le C_G (b)$ if and only if $Z(b) \le C_G (a)$.
\end{enumerate}
\end{lemma}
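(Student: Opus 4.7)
The plan rests on a single elementary observation: for every $g \in G \setminus Z(G)$, we have $g \in Z(g)$. Indeed, $g \in C_G(g)$ trivially, and $g$ commutes with every element of $C_G(g)$ by the definition of the centralizer; hence $g$ lies in $Z(C_G(g)) = Z(g)$. With this observation in hand, both parts of the lemma fall out quickly.

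For part (1), I would prove both directions directly. If $a \in C_G(b)$, then $a$ and $b$ commute, so $b \in C_G(a)$. Since $Z(a) = Z(C_G(a))$, every element of $Z(a)$ commutes with every element of $C_G(a)$ and, in particular, with $b$. This gives $Z(a) \le C_G(b)$. Conversely, if $Z(a) \le C_G(b)$, then the observation $a \in Z(a)$ immediately yields $a \in C_G(b)$.

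For part (2), I would simply chain part (1) (applied twice) with the trivial symmetry $a \in C_G(b) \iff b \in C_G(a)$:
\[
Z(a) \le C_G(b) \iff a \in C_G(b) \iff b \in C_G(a) \iff Z(b) \le C_G(a).
\]
There is essentially no obstacle here; the only subtlety worth flagging is that the hypothesis $a,b \in G \setminus Z(G)$ is used only to ensure that $Z(a)$ and $Z(b)$ are defined in the sense of the paper's notation, and plays no substantive role inside the argument itself.
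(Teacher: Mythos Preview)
Your proof is correct and follows essentially the same approach as the paper: both arguments use that $b \in C_G(a)$ forces $b$ to centralize $Z(a)$, and both reduce part (2) to two applications of part (1) via the symmetry $a \in C_G(b) \iff b \in C_G(a)$. The only cosmetic difference is that you isolate the observation $g \in Z(g)$ explicitly at the outset, whereas the paper uses it implicitly in the converse direction of (1).
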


\begin{proof}
Suppose $a \in C_G (b)$.  This implies that $a$ centralizes $b$, and so, $b$ centralizes $a$.  This implies that $b \in C_G (a)$.  We see that $b$ centralizes $Z(a) = Z(C_G(a))$.  Finally, $Z(a)$ centralizes $b$, and so, $Z(a) \le C_G (b)$. Now, suppose $Z(a) \le C_G (b)$.  This implies that $a \in C_G (b)$.  This proves (1).  If $Z(a) \le C_G (b)$, $a$ centralizes $b$.  Now, $b \in C_G (a)$, and so, $Z(b) \le C_G (a)$ by (1).  The converse of (2) follows similarly.
\end{proof}

We now obtain a characterization of abelian subgroups in terms of the centers and centralizers of their elements.

\begin{lemma} \label{threef}
Let $G$ be a group and let $H$ be a subgroup of $G$.  Then $H$ is abelian if and only if $Z(a) \le C_G (b)$ for all $a,b \in H \setminus Z(G)$.
\end{lemma}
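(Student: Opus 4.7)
The plan is to reduce both directions to Lemma \ref{three}(1), which says that for $a, b \in G \setminus Z(G)$, the condition $a \in C_G(b)$ is equivalent to $Z(a) \le C_G(b)$.

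For the forward direction, I would assume $H$ is abelian and take any $a, b \in H \setminus Z(G)$. Since $H$ is abelian, $a$ and $b$ commute, so $a \in C_G(b)$. Applying Lemma \ref{three}(1) immediately gives $Z(a) \le C_G(b)$, as required.

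For the reverse direction, I would assume $Z(a) \le C_G(b)$ for all $a, b \in H \setminus Z(G)$, and show that any two elements $h_1, h_2 \in H$ commute. I would split into cases based on whether $h_1$ or $h_2$ lies in $Z(G)$. If either one lies in $Z(G)$, it commutes with everything, so in particular with the other. The only interesting case is when both $h_1, h_2 \in H \setminus Z(G)$; here the hypothesis gives $Z(h_1) \le C_G(h_2)$, and Lemma \ref{three}(1) then yields $h_1 \in C_G(h_2)$, which means $h_1$ and $h_2$ commute. Since any two elements of $H$ commute, $H$ is abelian.

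There is no real obstacle here: the content of the lemma is essentially a repackaging of Lemma \ref{three}(1), lifted from the level of pairs of elements to the level of a subgroup. The only small bit of care needed is in handling elements of $H$ that happen to lie in $Z(G)$, since the hypothesis only quantifies over $H \setminus Z(G)$; but these elements are central in $G$ and therefore trivially commute with everything in $H$.
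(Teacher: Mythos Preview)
Your proposal is correct and follows essentially the same route as the paper: both directions are reduced to Lemma~\ref{three}(1), translating the condition $Z(a)\le C_G(b)$ into $a\in C_G(b)$ and vice versa. Your treatment is in fact slightly more careful in the reverse direction, where you explicitly dispose of the case that one of the two elements lies in $Z(G)$, a point the paper leaves implicit.
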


\begin{proof}
Suppose $H$ is abelian, and suppose that $a,b \in H \setminus Z(G)$.  Then $H \le C_G (b)$.  Since $a \in H$, this implies $a \in C_G (b)$.  By Lemma \ref{three}, we have that $Z(a) \le C_G (b)$.  Conversely, suppose for all $a,b \in H \setminus Z(G)$ that $Z(a) \le C_G (b)$.  This implies that $a \in C_G (b)$, and so, $a$ centralizes $b$.  Thus, $ab=ba$ for all $a,b \in H \setminus Z(G)$.  This implies that $H$ is abelian.
\end{proof}

\section{Maximal abelian subgroups}

We now prove the results regarding maximal abelian subgroups.  We first see that maximal abelian subgroup contain the centers of their elements.

\begin{lemma} \label{threeha}
Let $G$ be a group, and let $A$ be a maximal abelian subgroup of $G$.  If $a \in A \setminus Z(G)$, then $Z(a) \le A$.  In particular, if $A$ is maximal abelian, then $Z(G) \le A$.
\end{lemma}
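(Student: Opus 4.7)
The plan is to produce, for any $a \in A \setminus Z(G)$, the abelian subgroup $B := A Z(a)$ of $G$ and then invoke the maximality of $A$ to conclude $B = A$, which gives $Z(a) \le A$. The key point is elementwise commutativity of the two factors.

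First I would check that every element of $Z(a)$ commutes with every element of $A$. Since $A$ is abelian and $a \in A$, every element of $A$ centralizes $a$, so $A \le C_G(a)$. By definition $Z(a) = Z(C_G(a))$, so elements of $Z(a)$ commute with every element of $C_G(a)$, and in particular with every element of $A$. Because $Z(a)$ is itself abelian and commutes elementwise with $A$, the product set $B = A Z(a)$ is a subgroup of $G$ and is abelian. Since $A \le B$, maximality of $A$ forces $B = A$, whence $Z(a) \le A$, proving the first assertion.

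For the ``in particular'' statement I would split into two cases. If there exists $a \in A \setminus Z(G)$, then $Z(G) \le Z(C_G(a)) = Z(a) \le A$ by the part just proved. If instead $A \subseteq Z(G)$, then $Z(G)$ is itself an abelian subgroup containing $A$, so maximality of $A$ yields $A = Z(G)$, and again $Z(G) \le A$.

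The argument has no real obstacle; the only step that deserves a moment's attention is the claim that $A Z(a)$ is a subgroup, and this follows immediately from the elementwise commutativity established above, via the standard identity $(h_1 k_1)(h_2 k_2) = (h_1 h_2)(k_1 k_2)$ when the factor groups centralize each other.
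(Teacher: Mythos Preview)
Your argument is correct and follows essentially the same route as the paper: show $A \le C_G(a)$, deduce that $Z(a)$ centralizes $A$, form the abelian subgroup $AZ(a)$, and use maximality to get $Z(a) \le A$. Your treatment of the ``in particular'' clause is in fact slightly more careful than the paper's, which tacitly assumes $A \setminus Z(G) \ne \emptyset$ when it invokes $Z(G) \le Z(a)$; your case split covers the degenerate situation $A \subseteq Z(G)$ as well.
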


\begin{proof}
Since $a \in A$ and $A$ is abelian, we have $A \le C_G (a)$.  By definition, $Z(a)$ is the center of $C_G(a)$, and so $Z(a)$ centralizes $A$.  Since $A$ and $Z(a)$ are abelian, we see that $AZ(a)$ will be abelian.  Finally, the fact that $A$ is maximal abelian implies $AZ(a) = A$, and hence, $Z(a) \le A$.  Since $Z(G) \le Z(a)$, this implies $Z (G) \le A$.
\end{proof}

The follow observation is well known. 

\begin{lemma}
If $G$ is a group and $C_G (g)$ is abelian, then $C_G (g)$ is maximal abelian.
\end{lemma}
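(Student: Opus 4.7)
The plan is very short: note that $C_G(g)$ is already abelian by hypothesis, so the only thing to verify is maximality. Suppose $A$ is any abelian subgroup of $G$ containing $C_G(g)$. Since $g \in C_G(g) \subseteq A$ and $A$ is abelian, every element of $A$ commutes with $g$, which means $A \le C_G(g)$. Combined with $C_G(g) \le A$, this forces $A = C_G(g)$, so $C_G(g)$ is maximal abelian.

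There is no real obstacle here; the argument is essentially a one-liner that exploits the defining property of the centralizer (an element commutes with $g$ iff it lies in $C_G(g)$) together with the assumption that $C_G(g)$ is abelian. The only subtlety to mention is that we need $g \in C_G(g)$, which is of course automatic since $g$ commutes with itself, and hence $g \in A$ for any overgroup $A$ of $C_G(g)$.
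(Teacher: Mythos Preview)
Your proof is correct and essentially identical to the paper's: both take an abelian overgroup $A \supseteq C_G(g)$, observe $g \in A$, and conclude $A \subseteq C_G(g)$ since $A$ is abelian. You simply spell out the intermediate steps a bit more explicitly.
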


\begin{proof}
Let $A$ be an abelian subgroup containing $C_G (g)$.  Since $g \in A$, we see that $A \subseteq C_G (g)$, and so $A = C_G (g)$.  It follows that $C_G (g)$ is maximal abelian.	
\end{proof}

We now find two sufficient conditions for a maximal abelian subgroup to be the centralizer of an element in $G$.

\begin{lemma} \label{cyclic}
Let $G$ be a group and assume $A$ is maximal abelian in $G$.  
\begin{enumerate}
\item If $A/Z(G) = \langle a, Z(G) \rangle$ for some element $a \in A$, then $A = C_G (a)$.
\item If $|A:Z (G)|$ is a prime, then $A = C_G (a)$ for some element $a \in A$.
\end{enumerate}  
\end{lemma}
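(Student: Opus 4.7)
For part (1), the plan is to prove the two containments between $A$ and $C_G(a)$ separately. The forward inclusion $A \le C_G(a)$ is immediate: $A$ is abelian and contains $a$, so every element of $A$ centralizes $a$. The reverse inclusion is where I would use maximality: given any $c \in C_G(a)$, I would observe that $c$ commutes with $a$ (by assumption) and with every element of $Z(G)$ (trivially), so $c$ centralizes the set $\{a\} \cup Z(G)$, and therefore also the subgroup it generates, which by hypothesis is all of $A$. Thus $\langle A, c\rangle$ is an abelian subgroup of $G$ containing $A$. The maximality of $A$ then forces $\langle A, c\rangle = A$, giving $c \in A$. This yields $C_G(a) \le A$, completing (1).

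For part (2), the plan is to reduce to part (1). If $|A : Z(G)|$ is a prime $p$, then $A/Z(G)$ is a group of prime order, hence cyclic, generated by the coset $aZ(G)$ of any element $a \in A \setminus Z(G)$. Lifting back, $A = \langle a, Z(G)\rangle$, which is precisely the hypothesis of part (1) applied to this particular $a$. Part (1) then delivers $A = C_G(a)$.

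There is not really a main obstacle here; both parts follow directly from the definitions together with Lemma~\ref{threeha} (which already guarantees $Z(G) \le A$, so that the expression $\langle a, Z(G)\rangle$ sits inside $A$ and the quotient $A/Z(G)$ makes sense as a subgroup description of $A$). The only point that deserves explicit mention in the write-up is the step asserting that a cyclic group of prime order has a generator, which lets us pick the specific element $a$ in part (2). No further machinery from the earlier sections is needed.
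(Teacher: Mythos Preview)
Your proof is correct and essentially identical to the paper's own argument: both establish $A\le C_G(a)$ trivially, then for the reverse inclusion show that any $c\in C_G(a)$ centralizes $a$ and $Z(G)$ and hence all of $A=\langle a,Z(G)\rangle$, so $\langle A,c\rangle$ is abelian and maximality gives $c\in A$; part (2) is reduced to (1) in the same way by noting a prime-order quotient is cyclic.
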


\begin{proof}
Suppose $A = \langle a, Z(G) \rangle$ for some element $a \in A$.  Clearly, $A \le C_G (a)$.  Suppose $b \in C_G (a)$.  Then $b$ centralizes $a$ and $Z(G)$, so $b$ centralizes $A$.  This implies that $\langle A, b \rangle$ is abelian.  Since $A$ is maximal abelian, this implies that $b \in A$.  We conclude that $A = C_G (a)$.   Suppose that $|A:Z(G)|$ is a prime.  Then $A = \langle a, Z(G) \rangle$ for some element $a \in A$.  We then apply (1).
\end{proof}




We now consider a set of elements in $G$.  We see if the set lies in the intersection of the centralizers of the elements in the set, then the product of the centers lie in the center of the intersection of the centralizers.

\begin{lemma}\label{threeg1}
Let $G$ be a group and let $a_1, \dots, a_n$ be elements in $G \setminus Z(G)$.	 If $a_1, \dots, a_n \in \cap_{i=1}^n C_G (a_i)$, then $\prod_{i=1}^n Z(a_i) \le Z (\cap_{i=1}^n C_G (a_i))$.
\end{lemma}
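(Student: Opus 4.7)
The plan is to fix $i$ and show the stronger statement that $Z(a_i) \le Z(C)$, where $C = \bigcap_{j=1}^n C_G(a_j)$. Since $Z(C)$ is a subgroup, once this is shown for each $i$ the product $\prod_{i=1}^n Z(a_i)$ lies in $Z(C)$ as well.

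First I would verify that $Z(a_i) \le C$. For this, fix $j$ and note the hypothesis $a_j \in \bigcap_{k=1}^n C_G(a_k)$ says in particular that $a_j \in C_G(a_i)$, equivalently $a_i \in C_G(a_j)$. Since $a_i, a_j \in G \setminus Z(G)$, Lemma \ref{three}(1) applied to the pair $(a_i, a_j)$ gives $Z(a_i) \le C_G(a_j)$. Taking the intersection over all $j$ yields $Z(a_i) \le C$.

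Next I would show that every element of $C$ centralizes $Z(a_i)$. This is immediate from the defining property of $Z(a_i) = Z(C_G(a_i))$: any element of $C_G(a_i)$ commutes with every element of $Z(C_G(a_i))$, and $C \le C_G(a_i)$. Combining this with $Z(a_i) \le C$ from the previous step gives $Z(a_i) \le Z(C)$, as desired. Summing (in the subgroup sense) over $i = 1, \ldots, n$ produces $\prod_{i=1}^n Z(a_i) \le Z(C)$, which is the lemma.

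There is no real obstacle here: the argument is essentially two applications of the definition of the center of a centralizer, with Lemma \ref{three}(1) doing the work of converting the membership hypothesis $a_j \in C_G(a_i)$ into the containment $Z(a_i) \le C_G(a_j)$ that is needed to see $Z(a_i)$ land inside $C$.
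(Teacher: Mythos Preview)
Your proposal is correct and follows essentially the same approach as the paper's proof: both use the hypothesis to extract $a_j \in C_G(a_i)$ for all pairs, invoke Lemma~\ref{three}(1) to upgrade this to $Z(a_i) \le C_G(a_j)$, intersect over $j$ to land inside $C = \cap_j C_G(a_j)$, and then use $C \le C_G(a_i)$ together with the definition of $Z(a_i)$ to see that $C$ centralizes $Z(a_i)$. The only difference is organizational---you fix $i$ and show $Z(a_i) \le Z(C)$ before taking the product, while the paper first shows $\prod_j Z(a_j) \le C$ and then shows $C$ centralizes each $Z(a_j)$---but the underlying logic is identical.
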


\begin{proof}
Suppose that 
$$a_1, \dots, a_n \in \cap_{i=1}^n C_G (a_i).$$  Observe that $a_j \in C_G (a_i)$ for each $1 \le i,j, \le n$.  Applying Lemma \ref{three} (1), we have $Z(a_j) \le C_G(a_i)$ for each $i$ and $j$.  This implies that 
$$\prod_{j=1}^n Z(a_j) \le C_G (a_i)$$ for each $i$, and so, 
$$\prod_{j=1}^n Z(a_j) \le \cap_{i=1}^n C_G (a_i).$$  Note that if $b \in \cap_{i=1}^n C_G (a_i)$, then $b \in C_G (a_i)$ for each $i$, and so, $b$ centralizes $Z(a_j)$ for each $j$.  It follows that $\cap_{i=1}^n C_G (a_i)$ centralizes each $Z (a_j)$.  This proves that 
$$\prod_{j=1}^n Z(a_j) \le Z (\cap_{i=1}^n C_G (a_i)).$$ 	
\end{proof}

Using Lemma \ref{threeg1}, we can determine when the product of centers of elements will be abelian.  The next two results are generalizations of results that have been proven by Konieczny. This next lemma is a generalization of Proposition 1.2 (1) of \cite{kon}.     

Note that it is not clear in general that $\prod_{i=1}^n Z(a_i)$ will be a subgroup.  In fact, it is probably easy to find examples where it is not a subgroup.  Thus, we need to add this as an assumption in the next two results.  However, when the $a_i$'s commute, it is not difficult to see that $\prod_{i=1}^n Z(a_i)$ will be a subgroup which is why we do not need to include this hypothesis in Lemma \ref{threeg1} or Theorem \ref{threeg5}.

\begin{lemma} \label{threeg2}
Let $G$ be a group and let $a_1, \dots, a_n$ be elements in $G \setminus Z(G)$.   Assume $\prod_{i=1}^n Z(a_i)$ is a subgroup of $G$.  The subgroup $\prod_{i=1}^n Z(a_i)$ is abelian if and only if $\prod_{i=1}^n Z(a_i) \le \cap_{i=1}^n C_G (a_i).$
\end{lemma}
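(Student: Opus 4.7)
The plan is to prove both directions by first recording the useful observation that $a_i \in Z(a_i)$ for every $i$. This follows immediately from the definitions: $a_i \in C_G(a_i)$, and $a_i$ commutes with every element of $C_G(a_i)$ by the very definition of $C_G(a_i)$, so $a_i$ lies in the center of $C_G(a_i)$, namely $Z(a_i)$. Consequently, each $a_i$ belongs to $P := \prod_{j=1}^n Z(a_j)$.

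For the forward direction, assume $P$ is abelian. Fix any $i$. Since $a_i \in Z(a_i) \le P$ and $P$ is abelian, every element of $P$ commutes with $a_i$. Hence $P \le C_G(a_i)$. Intersecting over $i$ gives $P \le \bigcap_{i=1}^n C_G(a_i)$, as desired.

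For the reverse direction, assume $P \le \bigcap_{i=1}^n C_G(a_i)$. Since $a_j \in Z(a_j) \le P$ for each $j$, this inclusion immediately yields $a_1, \dots, a_n \in \bigcap_{i=1}^n C_G(a_i)$, which is exactly the hypothesis of Lemma \ref{threeg1}. Applying that lemma, we obtain $P = \prod_{j=1}^n Z(a_j) \le Z(\bigcap_{i=1}^n C_G(a_i))$. The right-hand side is the center of a subgroup of $G$ and is therefore abelian, so $P$ is abelian as a subgroup of an abelian group.

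The only non-routine ingredient is the observation that $a_i \in Z(a_i)$, which makes each $a_i$ available inside $P$; once this is in place, both directions reduce to a one-line application of Lemma \ref{threeg1} and the trivial fact that subgroups of abelian groups are abelian, so no serious obstacle arises.
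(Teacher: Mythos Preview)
Your proof is correct and follows essentially the same approach as the paper: both directions hinge on the observation that $a_i \in Z(a_i) \le P$, and the reverse direction is handled identically via Lemma~\ref{threeg1}. Your forward direction is in fact slightly cleaner than the paper's: once you have $P \le C_G(a_i)$ for each $i$, you simply intersect, whereas the paper takes the extra (and unnecessary) step of again invoking Lemma~\ref{threeg1} to reach the same conclusion.
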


\begin{proof}
Now, suppose that 
$$\prod_{i=1}^n Z(a_i) \le \cap_{i=1}^n C_G (a_i).$$  This implies that 
$$a_1, \dots, a_n \in \cap_{i=1}^n C_G (a_i),$$ and so, by Lemma \ref{threeg1}, $\prod_{i=1}^n Z(a_i)$ is abelian.  

Conversely, suppose that $\prod_{i=1}^n Z(a_i)$ is abelian.  For each $j$, observe that the element $a_j \in \prod_{i=1}^n Z(a_i)$, and since $\prod_{i=1}^n Z(a_i)$ is abelian, we have 
$$\prod_{i=1}^n Z(a_i) \le C_G (a_j).$$  This implies that $a_i \in C_G (a_j)$ for each $i$ and $j$, and so, 
$$a_i \in \cap_{j=1}^n C_G (a_j).$$  We now have 
$$a_1, \dots, a_n \in \cap_{i=1}^n C_G (a_i),$$ and by Lemma \ref{threeg1}, we deduce that 
$$\prod_{i=1}^n Z(a_i) \le \cap_{i=1}^n C_G (a_i).$$  
\end{proof}

Further, we determine that a product of centers of elements will be maximal abelian when it is the intersection of the centralizers of the elements.  We should note that partial results along these lines appear as Proposition 1.2 (2) and (3) of \cite{kon}.

\begin{lemma} \label{threeg3}
Let $G$ be a group and let $a_1, \dots, a_n$ be elements in $G \setminus Z(G)$. Assume $\prod_{i=1}^n Z(a_i)$ is a subgroup of $G$.  The subgroup $\prod_{i=1}^n Z(a_i)$ is maximal abelian if and only if $$\prod_{i=1}^n Z(a_i) = \cap_{i=1}^n C_G (a_i).$$
\end{lemma}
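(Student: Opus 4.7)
The plan is to set $P = \prod_{i=1}^n Z(a_i)$ and $D = \cap_{i=1}^n C_G(a_i)$ and derive the equivalence from two applications of Lemma \ref{threeg2}, anchored by one small observation: for each $i$, $a_i \in Z(a_i)$. Indeed $a_i \in C_G(a_i)$, and $a_i$ commutes with every element of $C_G(a_i)$ by the very definition of a centralizer, so $a_i$ lies in the center $Z(C_G(a_i)) = Z(a_i)$. In particular, each $a_i$ lies in $P$.

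For the reverse direction, I would assume $P = D$. Since $P \le D$, Lemma \ref{threeg2} gives that $P$ is abelian. To check maximality, let $B$ be an abelian subgroup containing $P$ and pick $b \in B$. Because each $a_i \in P \le B$ and $B$ is abelian, $b$ centralizes every $a_i$, so $b \in C_G(a_i)$ for each $i$, and hence $b \in D = P$. Thus $B = P$, showing $P$ is maximal abelian.

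For the forward direction, I would assume $P$ is maximal abelian. Lemma \ref{threeg2} already gives $P \le D$, so only $D \le P$ remains. Given $d \in D$, the strategy is to adjoin $d$ to $P$ and invoke maximality. The key point is that $d \in C_G(a_i)$ for every $i$, and $Z(a_i) = Z(C_G(a_i))$ consists of elements commuting with every member of $C_G(a_i)$; hence every element of $Z(a_i)$ commutes with $d$. Therefore $d$ centralizes each generating piece $Z(a_i)$ of $P$, making $\langle P, d \rangle$ an abelian subgroup containing $P$. Maximality forces $d \in P$, yielding $D \le P$.

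The argument presents no serious obstacle; it is essentially a bookkeeping exercise around Lemma \ref{threeg2}. The only subtleties worth flagging are the two definitional facts above: $a_i \in Z(a_i)$ (used to place all the $a_i$ inside $P$ in the reverse direction) and the fact that membership in $C_G(a_i)$ automatically yields commutation with every element of $Z(a_i)$ (used in the forward direction to show $\langle P, d\rangle$ is abelian). Once these are spotted, both implications are immediate.
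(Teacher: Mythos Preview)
Your proof is correct and follows essentially the same route as the paper's: both directions hinge on Lemma~\ref{threeg2}, and the maximality arguments are the same (any $d\in D$ centralizes $P$, so adjoining it would violate maximality unless $d\in P$). The only cosmetic difference is that the paper cites Lemma~\ref{threeg1} to obtain $P\le Z(D)$, whereas you re-derive that fact directly from the definition of $Z(a_i)$.
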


\begin{proof}
Suppose that 
$$\prod_{i=1}^n Z(a_i) = \cap_{i=1}^n C_G (a_i).$$  By Lemma \ref{threeg2}, we see that $\prod_{i=1}^n Z(a_i)$ is abelian.  By way of contradiction, assume $\prod_{i=1}^n Z(a_i)$ is not maximal abelian.  Then there exists an abelian subgroup $A$ so that $\prod_{i=1}^n Z(a_i) < A$.  Notice that $a_i \in A$ for each $i$, so $A \le C_G (a_i)$.  This implies that 
$$\prod_{i=1}^n Z(a_i) < A \le \cap_{i=1}^n C_G (a_i)$$ which contradicts
$$\prod_{i=1}^n Z(a_i) = \cap_{i=1}^n C_G (a_i).$$  Therefore, $\prod_{i=1}^n Z(a_i)$ is maximal abelian.

Conversely, suppose $\prod_{i=1}^n Z(a_i)$ is maximal abelian.  By Lemma \ref{threeg2}, we have 
$$\prod_{i=1}^n Z(a_i) \le \cap_{i=1}^n C_G (a_i)$$ and then by Lemma \ref{threeg1}, 
$$\prod_{i=1}^n Z(a_i) \le Z (\cap_{i=1}^n C_G (a_i)).$$  If 
$$\prod_{i=1}^n Z(a_i) < \cap_{i=1}^n C_G (a_i),$$ then there exists an element 
$$a \in \cap_{i=1}^n C_G (a_i) \setminus \prod_{i=1}^n Z(a_i).$$  Notice that 
$$\prod_{i=1}^n Z(a_i) < \langle a, \prod_{i=1}^n Z(a_i) \rangle$$ and 
$$\langle a, \prod_{i=1}^n Z(a_i) \rangle$$ is an abelian group.  This contradicts the maximality of $\prod_{i=1}^n Z(a_i)$.  Thus, we conclude $$\prod_{i=1}^n Z(a_i) = \cap_{i=1}^n C_G (a_i)$$.
\end{proof}

We now obtain equivalent characterizations of maximal abelian subgroups.

\begin{theorem}\label{threeg5}
Let $G$ be a group and let $A$ be a subgroup of $G$.  Then the following are equivalent:
\begin{enumerate}
	\item $A$ is maximal abelian.
	\item $$A = \prod_{a \in A \setminus Z(G)} Z(a) = \cap_{a \in A \setminus Z(G)} C_G (a).$$
	\item There is a subset $a_1, \dots, a_n$ in $A \setminus Z(G)$ so that $$A = \prod_{i=1}^n Z(a_i) = \cap_{i=1}^n C_G (a_i).$$
\end{enumerate}
\end{theorem}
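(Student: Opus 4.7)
The plan is to establish the cycle of implications $(1) \Rightarrow (2) \Rightarrow (3) \Rightarrow (1)$, relying essentially on Lemmas \ref{threeha} and \ref{threeg3} already proved.

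For $(1) \Rightarrow (2)$ I would first verify the intersection equality $A = \cap_{a \in A \setminus Z(G)} C_G(a)$: the containment $A \le \cap_{a \in A \setminus Z(G)} C_G(a)$ is immediate since $A$ is abelian, and for the reverse any $b$ in the intersection centralizes every element of $A \setminus Z(G)$ and trivially every element of $Z(G)$, so $\langle A, b\rangle$ is abelian and maximality of $A$ forces $b \in A$. For the product equality, Lemma \ref{threeha} gives $Z(a) \le A$ for each $a \in A \setminus Z(G)$, and these subgroups commute pairwise inside the abelian $A$, so $\prod_{a \in A \setminus Z(G)} Z(a)$ is a subgroup of $A$. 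Conversely, each $a \in A \setminus Z(G)$ lies in $Z(a)$, and $Z(G) \le Z(a)$ for any such $a$, which together exhaust $A$.

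The implication $(2) \Rightarrow (3)$ is trivial since $G$, hence $A \setminus Z(G)$, is finite: simply enumerate $A \setminus Z(G)$ as $a_1,\dots,a_n$. For $(3) \Rightarrow (1)$, the equality $A = \cap_{i=1}^n C_G(a_i)$ shows that $A$, and therefore also $\prod_{i=1}^n Z(a_i) = A$, is a subgroup, so the subgroup hypothesis of Lemma \ref{threeg3} is satisfied; that lemma then immediately yields that $A$ is maximal abelian.

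I do not expect any real obstacle: the statement is largely a repackaging of Lemmas \ref{threeha} and \ref{threeg3}, and the main step $(3) \Rightarrow (1)$ is almost entirely contained in the latter. The one point requiring mild care is checking that the products in (2) and (3) are genuinely subgroups before invoking Lemma \ref{threeg3}; in $(1) \Rightarrow (2)$ this is handled by Lemma \ref{threeha} forcing all the $Z(a)$ into the abelian $A$, and in $(3) \Rightarrow (1)$ it follows automatically from the displayed intersection equality.
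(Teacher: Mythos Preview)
Your proposal is correct and follows essentially the same cycle $(1)\Rightarrow(2)\Rightarrow(3)\Rightarrow(1)$ as the paper, invoking Lemmas~\ref{threeha} and~\ref{threeg3} at the same key points. The only minor difference is that in $(1)\Rightarrow(2)$ you establish the intersection equality $A=\cap_{a\in A\setminus Z(G)}C_G(a)$ by a direct maximality argument, whereas the paper first proves the product equality $A=\prod_{a\in A\setminus Z(G)}Z(a)$ and then derives the intersection equality from it via Lemma~\ref{threeg3}; both routes are equally short.
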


\begin{proof}
Let $A$ be a maximal abelian subgroup of $G$.  If $b \in A \setminus Z(G)$, then $b \in Z(b) \cap A \setminus Z(G)$, and by Lemma \ref{threeha}, we have $Z(b) \le A$. 
This implies that $\prod_{a \in A \setminus Z(G)} Z(a) \le A$ and since $Z(G) \le Z(a)$ for all $a$, we see that $A \le \prod_{a \in A \setminus Z(G)} Z(a)$.  Hence, $A = \prod_{a \in A \setminus Z(G)} Z(a)$.	Since $\prod_{a \in A \setminus Z(G)} Z(a)$ is maximal abelian, we may apply Lemma \ref{threeg3} to see that $\prod_{a \in A \setminus Z(G)} Z(a) = \cap_{a \in A \setminus Z(G)} C_G (a)$.  This proves that (1) implies (2).  We claim that (2) implies (3) is obvious.  Now, suppose there is a subset $a_1, \dots, a_n$ in $A \setminus Z(G)$ so that 
$$A = \prod_{i=1}^n Z(a_i) = \cap_{i=1}^n C_G (a_i).$$  
Applying Lemma \ref{threeg3}, we see that $A$ is maximal abelian.
\end{proof}

We now specialize our results to $p$-groups.   In this next theorem, we obtain a lower bound for the size of any maximal abelian subgroup of a $p$-group.  We are now ready to prove Theorem \ref{threeia} from the Introduction.


\begin{proof}[Proof of Theorem \ref{threeia}]
	
Applying Theorem \ref{threeg5}, $A = \prod_{x \in A \setminus G'} Z(x)$.  We can find elements 
$$g_1, g_2, \dots, g_t \in A \setminus Z(G)$$ so that $A = \prod_{i=1}^t Z (g_i) = \cap_{i=1}^t C_G (g_i)$ and $g_i \not\in \prod_{j=1}^{i-1} Z(g_j)$ for each $i$ with $2 \le i \le t$.    We obtain 
$$|G:A| = |G:\cap_{i=1}^t C_G (g_i)| \le \prod_{i=1}^t |G:C_G (g_i)| \le (p^b)^t = p^{bt}$$ and 
$$|A:Z(G)| = |\prod_{i=1}^t Z(g_i):Z(G)| \le \prod_{i=1}^t |Z(g_i):Z(G)| \le (p^l)^t = p^{lt}.$$  
It follows that 
$$p^n = |G:Z(G)| = |G:A||A:Z(G)| \le p^{bt} p^{lt} = p^{bt + lt} = p^{(b+l)t}.$$  This yields $n \le (b+l)t$, and so, $t \ge n/(b+l)$.  Now, we see that 
$$|A:Z(G)| = |A:\prod_{i=1}^{t-1} Z(g_i)|\cdots |Z(g_1)Z(g_2):Z(g_1)| |Z(g_1):Z(G)|.$$  
Notice that the choice of the $g_i$'s imply that $|Z(g_1):Z (G)| \ge p$ and 
$$|\prod_{i=1}^j Z(g_i):\prod_{i=1}^{j-1} Z(g_i)| \ge p$$ for each $j$ with $2 \le j \le t$.  This implies that $p^a = |A:Z(G)| \ge p^t.$  We conclude that $a \ge t \ge n/(b+l)$.
\end{proof}

\section{An application to Semi-extraspecial Groups}\label{ses}

A $p$-group $G$ is called {\it semi-extraspecial} if for all maximal subgroups $N$ of $Z(G)$, the quotient $G/N$ is an extraspecial group.  We will use s.e.s. groups to denote semi-extraspecial groups.  We believe that these groups were first studied by Beisiegel in \cite{beis}.  In our paper \cite{expos}, we give a detailed background of the research of these groups, and we do not wish to repeat that background.  We do want to mention that many of the results regarding these groups are due to Verardi and appear in \cite{ver}.

It is known that if $G$ is a s.e.s. $p$-group, then $|G:Z(G)|$ is a square, so there is a positive integer $n$ so that $|G:Z(G)| = p^{2n}$.  It is also known that if $m$ is the integer so that $|Z(G)| = p^m$, then $m \le n$.  If $G$ is a s.e.s. group where $n = m$, then $G$ is called {\it ultraspecial}.  It is also known that $G$ is special, i.e. that $G' = Z(G)$.

When $G$ is a s.e.s. group having the parameters of the previous paragraph, Verardi has shown that the largest possible order for a maximal abelian subgroup of $G$ is $p^{n+m}$ (see Theorem 1.8 of \cite{ver}).  It is not difficult to find for every prime $p$ and positive integers $m \le n$ s.e.s. groups $G$ with $|G:Z(G)| = p^{2n}$ and $|Z(G)| = p^m$ having maximal abelian subgroups of order $p^{n+m}$.  

In \cite{expos}, we present the results from the literature that show when $p$ is odd that the ultraspecial groups of order $p^{3n}$ having at least two maximal abelian subgroups of order $p^{2n}$ and exponent $p$ can be classified in terms of an algebraic object called a semifield.  Using these ideas, we show in \cite{sesone} when $p$ is odd how to construct all of the ultraspecial groups of order $p^{3n}$ and exponent $p$ having exactly one maximal abelian subgroup of order $p^{2n}$.  In \cite{genisom}, we show how to determine when these groups are isomorphic, and hence, we obtain a classification of these groups.  

On pages 148-149 of \cite{ver}, Verardi shows that there exist s.e.s. groups that do not have maximal abelian subgroups of this maximal possible order.  We and Josh Maglione have found many other examples of such groups.  The question arises of what can you say about the order of a maximal abelian subgroup in one of these groups.  

In addition, Verardi has shown that an ultraspecial group where all of the maximal abelian subgroups have this maximal possible order must be isoclinic to the Heisenberg group over a field (see Theorem 5.10 of \cite{ver}).  We say $G$ is the {\it Heisenberg group} of degree $p^a$ if $G$ is isomorphic to a Sylow $p$-subgroup of ${\rm GL}_3 (p^a)$.  Now, if when we consider s.e.s. groups that have an abelian subgroup of maximal possible order, but not all maximal abelian subgroups have this maximal possible order, it makes sense to ask what is the smallest possible order of maximal abelian subgroup.  




The next result is motivated by a question asked during a discussion with Thomas Keller during my visit to Texas State University in January 2018.  It is a corollary to Theorem \ref{threeia}.

\begin{corollary} \label{threei}
Let $G$ be an s.e.s. group with $|G:Z(G)| = p^{2n}$ and $|Z(G)| = p^m$.  Let $A$ is a maximal abelian subgroup of $G$ with $|A:Z(G)| = p^a$.
\begin{enumerate}
\item (General bound) Then $a \ge n/m$.  If $n \ge 2$, then $a \ge 2$.
\item If $p^l = {\rm max} \{|Z:Z(G)| \mid Z \in {\mathcal Z} (G) \}$, then $a \ge 2n/(m+l)$.
\end{enumerate}
\end{corollary}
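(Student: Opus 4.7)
The plan is to derive both statements from Theorem \ref{threeia} once the value of $b$ in the semi-extraspecial setting has been identified. The key input is that in an s.e.s.\ group every non-central element satisfies $[G,x] = Z(G)$. I would establish this by reducing modulo each maximal subgroup $N \le Z(G)$: the quotient $G/N$ is extraspecial, so $[G/N, xN] = Z(G)/N$ for $xN \neq 1$, which gives $[G,x]N = Z(G)$ for every such $N$. Since $[G,x] \le G' = Z(G)$ lies in no maximal subgroup of $Z(G)$, it must equal $Z(G)$. Because $[G,x] \le Z(G)$, the class of $x$ is the coset $x[G,x]$, yielding $|G:C_G(x)| = p^m$. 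Thus in the notation of Theorems \ref{main} and \ref{threeia} we have $b = m$.

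Part (2) is then immediate from Theorem \ref{main} with $|G:Z(G)| = p^{2n}$ and $b = m$, giving $a \ge 2n/(m+l)$.

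For part (1), rather than substituting a bound for $l$, I would rerun the counting argument inside the proof of Theorem \ref{threeia}. Using Theorem \ref{threeg5}, pick $g_1, \dots, g_t \in A \setminus Z(G)$ with
\[
A = \prod_{i=1}^t Z(g_i) = \bigcap_{i=1}^t C_G(g_i), \qquad g_i \notin \prod_{j<i} Z(g_j) \ \text{for}\ i \ge 2.
\]
The chain condition forces $p^a \ge p^t$, while the intersection bound combined with $b = m$ gives $|G:A| \le p^{mt}$, hence $p^a \ge p^{2n-mt}$. Thus $a \ge \max(t, 2n - mt)$, and the elementary observation that $\max(s, 2n-ms) \ge 2n/(m+1)$ for every real $s$ yields $a \ge 2n/(m+1)$. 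The bound $a \ge n/m$ follows because $2n/(m+1) \ge n/m$ whenever $m \ge 1$. The strengthened conclusion when $n \ge 2$ comes from $m \le n$, which gives $a \ge 2n/(n+1) > 1$; since $a$ is a nonnegative integer this forces $a \ge 2$.

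The only substantive step is the Camina-type computation that pins down $b = m$; after that, part (2) is a direct quotation of Theorem \ref{main} and part (1) is a short convexity argument on $\max(t, 2n - mt)$ combined with the machinery already present in the proof of Theorem \ref{threeia}.
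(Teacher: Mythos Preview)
Your argument is correct. For part (2) you do exactly what the paper does: identify $b=m$ in an s.e.s.\ group and quote Theorem~\ref{main}/\ref{threeia}. For part (1), however, you take a genuinely different route. The paper derives $a\ge n/m$ from part (2) together with Verardi's bound $l\le m$ (Proposition~1.7 of \cite{ver}), and then handles the case $n\ge 2$ separately via Lemma~\ref{cyclic}(2): if $a=1$ then $A=C_G(g)$ for some $g$, forcing $|G:C_G(g)|=p^{2n-1}>p^n\ge p^m$, a contradiction. You instead bypass $l$ entirely by combining the two inequalities $a\ge t$ and $a\ge 2n-mt$ extracted from the proof of Theorem~\ref{threeia}; the linear combination $m\cdot(a\ge t)+(a\ge 2n-mt)$ gives $(m+1)a\ge 2n$, hence $a\ge 2n/(m+1)\ge n/m$, and then $m\le n$ yields $a\ge 2n/(n+1)>1$ for $n\ge 2$. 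Your approach is more self-contained (no appeal to Verardi or to Lemma~\ref{cyclic}) and in fact produces the slightly sharper intermediate bound $a\ge 2n/(m+1)$; the paper's approach is shorter once those external facts are granted.
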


\begin{proof}
We obtain (2) using Theorem \ref{threeia} by realizing that $|G:C_G (a)| = p^m$ for all $a \in G \setminus G'$ when $G$ is an s.e.s. group.  To obtain (1), we use the observation from Verardi that $l \le m$ (see Proposition 1.7 of \cite{ver}).  Thus, $m +l \le 2m$.  We have by (1), $a \ge 2n/(m+l) \ge 2n/2m = n/m$.  Note that if $a = 1$, then $|A:G'|$ would have prime order.  By Lemma \ref{cyclic} (2), we see that $A = C_G (g)$ for some element $g \in G$.  This implies that $|G:C_G (g)| = p^{2n - 1}$.  On the other hand, since $G$ is an s.e.s., we know that $|G:C_G (g)| \le p^n$, so we have a contradiction if $n \ge 2$.  Hence, we must have $a \ge 2$ if $n \ge 2$.
\end{proof}

Notice that if $G$ is an extraspecial group, then $m = 1$ in the notation of Corollary \ref{threei}, and conclusion (1) implies if $A$ is maximal abelian, then $|A:Z(G)| \ge p^{n/1} = p^n$.  Hence, we obtain as a consequence of Corollary \ref{threei} that all maximal abelian subgroups of an extraspecial group have the same size of $p^{n+1}$.  We expect that this is a known result, but we do not have a reference at this time.

On the other extreme, if $G$ is ultraspecial, then $m = n$.  From Corollary \ref{threei} we obtain that a maximal abelian subgroup $A$ satisfies $|A:G'| \ge p^2$.  It seems natural to ask if this is the best possible.  We now show that the answer is yes, this the best possible.

\begin{lemma}
Let $p$ be an odd prime, and let $n \ge 3$ be an integer.  Then there is an ultraspecial group of order $p^{3n}$ that has a maximal abelian subgroup of order $p^{n+2}$.
\end{lemma}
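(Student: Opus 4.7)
The plan is to exhibit an explicit ultraspecial group $G$ of order $p^{3n}$ and exponent $p$ possessing a maximal abelian subgroup of order exactly $p^{n+2}$. I would begin from the semifield/spread-set construction of ultraspecial groups recalled in \cite{expos}. Fix an $n$-dimensional semifield $\mathcal{S}\subseteq M_n(\mathbb{F}_p)$ with basis $B_1,\dots,B_n$ (so every nonzero element of $\mathcal{S}$ is invertible), set $B(u,w)=(u^{T}B_1w,\dots,u^{T}B_nw)\in\mathbb{F}_p^n$, and endow $\mathbb{F}_p^n\oplus\mathbb{F}_p^n\oplus\mathbb{F}_p^n$ with the operation
$$(u,w,z)(u',w',z')=\bigl(u+u',\,w+w',\,z+z'+B(u,w')\bigr).$$
A short induction gives $(u,w,z)^k=(ku,kw,kz+\binom{k}{2}B(u,w))$, so for $p$ odd the group $G$ has exponent $p$ because $\binom{p}{2}\equiv 0\pmod p$. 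The center coincides with $G'=\{(0,0,z)\}$, and the commutator form on $G/Z(G)$ is $[(u_1,w_1),(u_2,w_2)]=B(u_1,w_2)-B(u_2,w_1)$.

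I would choose $\mathcal{S}$ so that $G$ is not the Heisenberg group of degree $p^n$, since Verardi (Theorem~5.10 of \cite{ver}) shows the Heisenberg case has all centralizers abelian and therefore all maximal abelian subgroups of full order $p^{2n}$. For $n\ge 3$ and $p$ odd, non-field semifields of order $p^n$ are classically available (Albert's twisted fields, Knuth or Hughes--Kleinfeld semifields, etc.). Inside the corresponding $G$, I would search for commuting non-central elements $g_1=(u_1,w_1,0)$ and $g_2=(u_2,w_2,0)$, that is, satisfying $B(u_1,w_2)=B(u_2,w_1)$. Setting $A=\langle g_1,g_2,Z(G)\rangle$, an abelian subgroup of order $p^{n+2}$, Theorem \ref{threeg5} reduces the claim that $A$ is maximal abelian to verifying that $C_G(g_1)\cap C_G(g_2)=A$, which, when transferred to $G/Z(G)$, says exactly that the $\mathbb{F}_p$-linear map
$$\phi:\mathbb{F}_p^{2n}\longrightarrow\mathbb{F}_p^{2n},\qquad (u,w)\mapsto\bigl(B(u_1,w)-B(u,w_1),\,B(u_2,w)-B(u,w_2)\bigr)$$
has kernel exactly the two-dimensional plane $\langle(u_1,w_1),(u_2,w_2)\rangle$; equivalently, $\mathrm{rank}\,\phi=2n-2$, the largest value compatible with the plane lying in $\ker\phi$.

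The main obstacle is producing a commuting pair $(g_1,g_2)$ that actually realizes $\mathrm{rank}\,\phi=2n-2$. A direct check shows that in the Heisenberg case the $\mathbb{F}_{p^n}$-linear structure forces $\mathrm{rank}\,\phi=n$ on every commuting pair, which is precisely the obstruction reflected by Verardi's theorem. For a non-Heisenberg $\mathcal{S}$ this forced collapse disappears, and I would close the argument in one of two ways: (i) pick a concrete semifield of order $p^n$ (say a twisted field of degree $n$ over $\mathbb{F}_p$, available for odd $p$ and $n\ge 3$) and compute $\mathrm{rank}\,\phi$ explicitly for specific $(u_1,w_1,u_2,w_2)$ satisfying the commuting relation, using the structure constants of $\mathcal{S}$ to read off the determinantal conditions; or (ii) argue by a dimension count that the locus of commuting pairs with $\mathrm{rank}\,\phi<2n-2$ is a proper Zariski-closed subset of the variety of commuting pairs as soon as $\mathcal{S}\ne\mathbb{F}_{p^n}$, so that $\mathbb{F}_p$-rational commuting pairs with $\mathrm{rank}\,\phi=2n-2$ exist. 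Either route produces the required maximal abelian subgroup $A$ of order $p^{n+2}$ inside $G$ and completes the lemma.
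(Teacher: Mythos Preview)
Your reduction is sound: inside a semifield ultraspecial group, $A=\langle g_1,g_2,Z(G)\rangle$ is maximal abelian precisely when the linear map $\phi$ has rank $2n-2$. The gap is that you never establish the existence of such a commuting pair. Route (i) is deferred to an unspecified computation; route (ii) is not reliable over $\mathbb{F}_p$, since a Zariski-open condition on the variety of commuting pairs need not have $\mathbb{F}_p$-rational points even when nonempty over $\overline{\mathbb{F}_p}$, and you have not argued that the commuting-pair variety is large enough, or that the degeneracy locus $\{\mathrm{rank}\,\phi<2n-2\}$ is genuinely proper, for an arbitrary non-field semifield. The appeal to Verardi only tells you that \emph{some} maximal abelian subgroup has order below $p^{2n}$, not that one of order exactly $p^{n+2}$ occurs. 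So as written, the proposal is a plausible outline but not a proof.

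The paper takes a different and more direct route that sidesteps this search entirely. Rather than hunting inside a fixed semifield group, it first writes down an explicit class-$2$ group $H$ of order $p^{2n}$ with $|Z(H)|=p^{n+1}$ in which a chosen element $b$ satisfies $C_H(b)=\langle b\rangle Z(H)$, so $C_H(b)$ is maximal abelian of order $p^{n+2}$ by construction. It then invokes Theorem~3.1 of \cite{sesone} to manufacture a generalized semifield group $G=G(\alpha,\beta)$ of order $p^{3n}$ in which $H$ appears as the centralizer $C_G(a)$ of a noncentral element $a$; here $\alpha$ comes from any semifield of order $p^n$ (the field itself suffices, so no delicate semifield existence questions arise) and $\beta$ encodes the commutator map of $H$. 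Since any abelian subgroup of $G$ containing $a$ lies in $C_G(a)=H$, the subgroup $C_H(b)$ is automatically maximal abelian in $G$. This ``build the centralizer first, then wrap the ultraspecial group around it'' strategy replaces your rank computation by a one-line containment argument.
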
`

\begin{proof}
Let $H$ be the $p$-group having exponent $p$ with the following generators: $a$, $b$, $c_1, \dots, c_{n-2}$, $z_1, \dots, z_n$.  We assume that each of the generators has order $p$.  We assume that the following commutators hold $[b,c_1] = z_1, \dots, [b,c_{n-2}] = z_{n-2}$, and other commutators either follow linearly from these or are trivial.  Note that $|H| = p^{2n}$, $Z (H) = \langle a, z_1, \dots, z_n \rangle$, and $H' = \langle z_1, \dots, z_{n-2} \rangle$.  We will define $Z = \langle z_1, \dots, z_n \rangle$.  It is not difficult to see that $C_H (b) = \langle b \rangle Z (H)$ and that $C_H (b)$ is a maximal abelian subgroup of $H$.  Also, $|C_H (b)| = p^{n+2}$.

Now, let $F$ be a semifield of order $p^n$.  (When $n = 3$, take $F$ to be the field of order $p^3$).  Identify $V$ with $F$, and take $\alpha: V \times V \rightarrow V$ to be the map obtained from the semi-field multiplication of $F$.   We can identify $H/Z$ and $Z$ with $V$, so that the commutator map in $H$, defines a bilinear map $\beta: V \times V \rightarrow V$ by $\beta (h_1Z,h_2Z) = \gamma[h_1,h_2]$ where $\gamma$ is the inverse for $2$ in $F$ (identified with $V$).  We use Theorem 3.1 of \cite{sesone} to construct the generalized semifield group $G = G(\alpha,\beta)$ whose associated semifield is $F$.  Observe that the subgroup identified as $B$ in Theorem 3.1 will be $C_G (a) = H$.  Notice that any abelian subgroup of $G$ containing $a$ must lie in $C_G (a)$.  Since $C_H (b)$ is a maximal abelian subgroup of $C_G (a)$ that contains $a$, it will be a maximal abelian subgroup of $G$.  This proves the result.
\end{proof}

The next question is whether one can find an ultraspecial group of order $p^{3n}$ where {\it all} of the maximal abelian subgroups have order $p^{n+2}$.  Our memory is that this is the specific question that Thomas had in mind in 2018, but now in February of 2024, we have again by asked this question by Rolf Brandl.  We note that if $G$ is an ultraspecial group of order $p^9$ which has no abelian subgroups of order $p^6$, then the exposition before the previous lemma says that every maximal abelian subgroup of $G$ will have order $p^5$.  (And we know such groups occur for $p = 3, 5, 7$.)  In fact, if $G$ is ultraspecial of order $p^9$ that is not the Heisenberg group, then $G$ will have maximal abelian subgroups of order $p^5$.  

Rolf then goes on to ask if ultraspecial groups of order $p^{3n}$ where all the maximal abelian subgroups have order $p^{n+2}$ must be isoclinic.  This question we can at least give a partial answer.  When $p = 3$ and $n = 3$, Josh Maglione has used Magma to compute all of the ultraspecial groups of order $p^9$ that do not have an abelian subgroup of order $p^6$ and they are isoclinic.  He also has been able to do this for $p = 5$ and $p=7$.  For $p= 5$, there are $2$ isoclinism classes and for $p=7$, there are $3$ isoclinism classes.  Hence, it seems likely for $p$ odd that there are $(p-1)/2$ isoclinism classes of ultraspecial groups of order $p^9$ that do not have an abelian subgroup of order $p^6$.  For $n> 3$, it seems likely that there will exist ultraspecial groups of order $p{3n}$ where all of the centralizers have order $p^{n+2}$, and since they are not isoclinic for $n=3$, it seems unlikely that they will be isoclinic for $n > 3$. 

We now continue with more speculation.  Notice that the question of the maximal abelian subgroups of an s.e.s. group $G$ is exactly the question of the maximal abelian subgroups of the centralizers of noncentral elements of $G$.  In particular, an abelian subgroup $A$ of $G$ is a maximal abelian subgroup of $G$ if and only if $A$ is a maximal abelian subgroup of $C_G (a)$ for some element $a \in A \setminus G'$.  (Notice that if $B \le C_G (a)$ is abelian for some element $a \in A \setminus Z(G)$ and $A \le B$, then $B \le C_G (b)$ for all elements $b \in A \setminus Z(G)$.  Thus, if $A$ is maximal abelian in $C_G (a)$ for some element $a \in A \setminus Z(G)$, then $A$ is maximal abelian in $C_G (a)$ for all elements $a \in A \setminus Z(G)$.)  A weaker question is: can we find an ultraspecial group of order $p^{3n}$ with an element $g \in G \setminus Z(G)$ so that all the maximal abelian subgroups in $C_G (g)$ have order $p^{n+2}$?  

For a prime $p$ and integer $n$, if we have a class $2$ group $H$ of order $p^{2n}$ and exponent $p$ so that $|Z(H)| > p^n$ and $|H'| \le p^n$, then we know that we can construct an ultraspecial group $G$ of order $p^{3n}$ that has $H$ as a centralizer of a nonidentity element of $G$.  In particular, if we can find such an $H$ so that $|Z(H)| = p^{n+1}$ and $C_H (h) = \langle h, Z(H) \rangle$ for all elements $h \in H \setminus Z(H)$, then we will have an ultraspecial group $G$ with a maximal abelian subgroup of order $p^{n+2}$.  Notice that this construction produces an abelian subgroup of order $p^{2n}$ in $G$.  When $n = 4$, it is not difficult to find examples.  (Take the generators $a_1, a_2, a_3, z$.  Assume $z$ is central.  Assume $[a_1,a_2]$, $[a_1,a_3]$, $[a_2,a_3]$ are distinct.)  (For $n = 5$, take generators $a_1,a_2,a_3, a_4, z$ where $z$ is central and assume that $[a_1,a_2]$, $[a_3,a_4]$, $[a_1,a_3]$, $[a_2,a_4]$, and $[a_1,a_4] = [a_2,a_3]$ are distinct.)  (Not sure if it can be done for $n = 6$.)

Conversely, if $G$ is ultraspecial of order $p^{3n}$ with all maximal abelian subgroup of order $p^{n+2}$, then there must exist an element $a$ whose centralizer $C = C_G (a)$ satisfies: $C' < Z(C)$, $|Z(C)| = p^{n+1}$, $|C'| \le p^n$ and $C_{C} (b) = \langle b,Z(a) \rangle $ for all $b \in C \setminus Z(a)$.  We would not be surprised if such a group does not exist for large enough $n$.

Before we close the paper, we have another question and observation.  The question is: If $G$ is an s.e.s. group and $a, b_1, \dots, b_n$ are elements of $G \setminus G'$, then are the following true: 
\begin{enumerate}
	\item  either $Z (a) \le \prod_{i=1}^n Z (b_i)$ or $Z (a) \cap \prod_{i=1}^n Z (b_i) = G'$; and 
	\item either $Z (a) \le Z (\cap_{i=1}^n C_G (b_i))$ or $Z (a) \cap Z (\cap_{i=1}^n C_G (b_i)) = G'$?
\end{enumerate}
The observation is: if (1) is true, then we can prove the following using the proof of Corollary \ref{threei}:  let $p^k = {\rm min} \{ |Z:Z(G)| \mid Z \in {\mathcal Z} (G), |Z:Z(G)| < p^n \}$.  If $A$ is maximal abelian in $G$ and $|A:Z(G)| < p^n$, then $|A:Z(G)| \ge p^{2kn/(m+l)}$.


\end{document}